\RequirePackage{fix-cm}
\documentclass[smallextended,envcountsect,envcountsame]{svjour3}
\smartqed
\usepackage[T1]{fontenc}
\usepackage[utf8]{inputenc}
\usepackage{amsmath,amssymb,mathtools,mathrsfs,bm}
\usepackage{enumitem}
\usepackage{microtype}
\usepackage{hyperref}
\usepackage[nameinlink,capitalize]{cleveref}
\hypersetup{colorlinks=true,linkcolor=blue,citecolor=blue,urlcolor=blue}

\journalname{Journal of Evolution Equations}

\begin{document}
\title{Exact Traces for Vector-Valued Morrey Spaces}
\titlerunning{Exact Traces for Vector-Valued Morrey Spaces}
\author{Rishad Shahmurov \and Veli Shahmurov\thanks{Corresponding author}}
\authorrunning{R. Shahmurov and V. Shahmurov}
\institute{R. Shahmurov \at
Cellular Products Research and Development, 595 East Crossville Road, Roswell, GA 30075, USA \\
\email{rshahmurov@crimson.ua.edu}
\and
V. Shahmurov \at
Antalya Bilim University, Antalya, Turkey \\
\email{veli.sahmurov@antalya.edu.tr}}
\date{}
\maketitle

\begin{abstract}
Trace spaces determine exactly which initial values are compatible with an evolution class. We identify the exact trace generated by vector-valued Morrey control in time and show that it differs essentially from the classical $L^p$ theory. For a Banach couple $X_1\hookrightarrow X_0$, the trace of the natural Morrey evolution class is the weak real-interpolation space $(X_0,X_1)_{\theta,\infty}$, where $\theta=1-(1-\lambda)/p$. Every element of this space occurs as a trace through a bounded extension operator. The result is sharp in both interpolation parameters: in general the smoothness exponent cannot be increased and the fine index $\infty$ cannot be replaced by any finite index. Thus Morrey control changes the exact trace mechanism rather than merely strengthening an integrability estimate. At the limiting endpoint, bounded mean oscillation (BMO) control yields finite-index interpolation traces together with a logarithmic modulus of continuity. The result isolates the precise initial-data space naturally associated with local, scale-sensitive time regularity and provides a trace framework suited to evolution equations with Morrey-type maximal regularity.
\keywords{Morrey spaces \and trace spaces \and real interpolation \and evolution equations \and bounded mean oscillation}
\subclass{Primary 46E35, 47D06 \and Secondary 46B70, 46M35, 35K90}
\end{abstract}

\section{Introduction}
Trace spaces describe exactly which initial values are compatible with a given evolution class. In the classical $L^p$ setting, the theorem of Lions and Peetre identifies the trace of
\[
W^{1,p}(0,T;X_0)\cap L^p(0,T;X_1),\qquad X_1\hookrightarrow X_0,
\]
with $(X_0,X_1)_{1-1/p,p}$ and provides a bounded extension operator~\cite{LionsPeetre1964,Peetre1966}. This result became a basic ingredient of abstract parabolic theory and modern maximal regularity~\cite{Weis2001,KunstmannWeis2004}.

Morrey control changes the question. Instead of measuring the time variable only through one global $L^p$ norm, it controls the solution on every time interval with a scale-dependent normalization. It is therefore not clear a priori whether the classical trace survives, whether only the smoothness exponent changes, or whether the interpolation fine index changes as well.

We show that the change is structural. For $1<p<\infty$ and $0<\lambda<1$, set
\[
\theta=1-\frac{1-\lambda}{p}.
\]
For the natural Morrey class in which $u$ takes values in $X_1$ and $u'$ takes values in $X_0$, the trace at $t=0$ is exactly
\[
(X_0,X_1)_{\theta,\infty}.
\]
Every element of this space occurs as a trace, with a bounded extension back to the evolution class. Both parts of the answer are sharp: the exponent $\theta$ cannot be increased in general, and the weak fine index $\infty$ cannot be replaced by any finite $q$. In particular, passing from $L^p$ to Morrey control changes not only the trace exponent but also the fine interpolation scale.

Vector-valued trace theory in Sobolev, Besov, Triebel--Lizorkin, anisotropic and mixed-norm settings is extensive~\cite{Schneider2010,MeyriesVeraar2014,JohnsenSickel2018,AgrestiLindemulderVeraar2023,DenkRoodenburg2026}. Morrey interpolation and Besov--Morrey scales have also been studied~\cite{BurenkovEtAl2010,BurenkovGhorbanalizadehSawano2019,Peetre1969Morrey,Sawano2018}, and Morrey-type estimates occur in parabolic regularity~\cite{OgawaSuguro2023}. These results do not identify the exact trace space above, nor the forced weak fine index.

The endpoints reveal a second feature of the theory. The case $\lambda=1$ produces a subcritical family rather than the same exact identity, while $BMO$ control leads naturally to finite interpolation traces and, under a mixed $L^\infty$--$BMO$ hypothesis, to logarithmic continuity~\cite{JohnNirenberg1961}. We keep the paper focused on this trace mechanism and its sharpness; singular-integral lifting and maximal regularity are treated separately.
\section{Background and notation}\label{sec:background}
	
	\subsection{The \texorpdfstring{$K$}{K}-functional and real interpolation}
	
	Let $(X_0,X_1)$ be a compatible Banach couple, that is, $X_0$ and $X_1$ are continuously embedded into a common Hausdorff topological vector space. For $x\in X_0+X_1$ and $r>0$, the Peetre $K$-functional is defined by
	\[
	K(r,x;X_0,X_1)
	:=
	\inf_{x=x_0+x_1}
	\bigl(
	\|x_0\|_{X_0}+r\|x_1\|_{X_1}
	\bigr),
	\qquad x_0\in X_0,\ x_1\in X_1.
	\]
	
	We recall the basic properties used throughout the paper.
	
	\begin{proposition}\label{prop:K-basic}
		Let $(X_0,X_1)$ be a Banach couple, let $x\in X_0+X_1$, and let $r>0$. Then:
		
		\begin{enumerate}[label=\textup{(\roman*)}]
			\item $K(r,\cdot;X_0,X_1)$ is a seminorm on $X_0+X_1$, and an equivalent norm on the quotient by the natural kernel;
			
			\item the map
			\[
			r\mapsto K(r,x;X_0,X_1)
			\]
			is nondecreasing on $(0,\infty)$;
			
			\item the map
			\[
			r\mapsto r^{-1}K(r,x;X_0,X_1)
			\]
			is nonincreasing on $(0,\infty)$;
			
			\item for every decomposition $x=x_0+x_1$ with $x_0\in X_0$ and $x_1\in X_1$,
			\[
			K(r,x;X_0,X_1)\le \|x_0\|_{X_0}+r\|x_1\|_{X_1};
			\]
			
			\item if $x\in X_0\cap X_1$, then
			\[
			K(r,x;X_0,X_1)\le \min\{\|x\|_{X_0},\,r\|x\|_{X_1}\}.
			\]
		\end{enumerate}
	\end{proposition}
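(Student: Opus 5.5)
All five items follow directly from the definition of $K(r,x;X_0,X_1)$ as an infimum over decompositions. I will prove (iv) first, since it is essentially the definition, and then derive the rest from it. Item (iv) holds because $\|x_0\|_{X_0}+r\|x_1\|_{X_1}$ is one of the quantities over which the infimum is taken. For (v), apply (iv) to the two decompositions $x=x+0$ and $x=0+x$, which are admissible because $x\in X_0\cap X_1$. This gives $K\le\|x\|_{X_0}$ and $K\le r\|x\|_{X_1}$.

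For (ii), fix $0<r<s$. For any decomposition $x=x_0+x_1$ we have $\|x_0\|_{X_0}+r\|x_1\|_{X_1}\le \|x_0\|_{X_0}+s\|x_1\|_{X_1}$. Taking the infimum over decompositions on both sides gives $K(r,x)\le K(s,x)$. For (iii), divide by $r$:
\[
r^{-1}\bigl(\|x_0\|_{X_0}+r\|x_1\|_{X_1}\bigr)=r^{-1}\|x_0\|_{X_0}+\|x_1\|_{X_1}.
\]
This expression is nonincreasing in $r$ for each fixed decomposition. Since an infimum of nonincreasing functions is nonincreasing, $r^{-1}K(r,x)$ is nonincreasing.

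For (i), positive homogeneity comes from the bijection $(x_0,x_1)\mapsto(\alpha x_0,\alpha x_1)$ between decompositions of $x$ and of $\alpha x$ when $\alpha\neq0$; the case $\alpha=0$ is trivial. For subadditivity, take $\varepsilon$-optimal decompositions $x=x_0+x_1$ and $y=y_0+y_1$. Then $(x_0+y_0)+(x_1+y_1)$ is a decomposition of $x+y$, and the triangle inequalities in $X_0$ and $X_1$ give $K(r,x+y)\le K(r,x)+K(r,y)+2\varepsilon$.

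The norm statement is the only step needing care. Note that
\[
\min(1,r)\,\|x\|_{X_0+X_1}\le K(r,x)\le \max(1,r)\,\|x\|_{X_0+X_1},
\]
where $\|x\|_{X_0+X_1}=K(1,x)$. Both inequalities follow by comparing the weights $1$ and $r$ decomposition by decomposition. Hence each $K(r,\cdot)$ is equivalent to the sum norm.

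It remains to see that $\|\cdot\|_{X_0+X_1}$ is a genuine norm, so that the natural kernel is trivial. Suppose $K(1,x)=0$. Then $x=x_0^n+x_1^n$ with $x_0^n\to0$ in $X_0$ and $x_1^n\to0$ in $X_1$. By the continuous embeddings into the common Hausdorff space $\mathcal V$, $x=x_0^n+x_1^n\to0$ in $\mathcal V$. Since limits in a Hausdorff space are unique, $x=0$. So in (i) the quotient is by $\{0\}$, and $K(r,\cdot)$ is an equivalent norm on $X_0+X_1$ itself.
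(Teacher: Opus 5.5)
Your proof is correct and follows the same route as the paper. Items (ii)--(v) are obtained exactly as there, by comparing the defining quantity decomposition by decomposition and then passing to the infimum; for (i), which the paper simply calls immediate, you supply the omitted details, namely the seminorm verification, the comparison $\min(1,r)\|x\|_{X_0+X_1}\le K(r,x)\le\max(1,r)\|x\|_{X_0+X_1}$, and the Hausdorff argument showing that the kernel is trivial, so the quotient is $X_0+X_1$ itself.
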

	
	\begin{proof}
		All assertions are standard consequences of the definition. The monotonicity in \textup{(ii)} follows because increasing $r$ increases the quantity
		\[
		\|x_0\|_{X_0}+r\|x_1\|_{X_1}
		\]
		for every fixed decomposition. Assertion \textup{(iii)} is obtained by dividing the defining expression by $r$ and observing that
		\[
		r^{-1}K(r,x;X_0,X_1)
		=
		\inf_{x=x_0+x_1}
		\bigl(
		r^{-1}\|x_0\|_{X_0}+\|x_1\|_{X_1}
		\bigr),
		\]
		which is nonincreasing in $r$. The remaining statements are immediate.
	\end{proof}
	
	For $0<\theta<1$ and $1\le q\le\infty$, the real interpolation space $(X_0,X_1)_{\theta,q}$ consists of all $x\in X_0+X_1$ such that
	\[
	\|x\|_{(X_0,X_1)_{\theta,q}}
	:=
	\begin{cases}
		\left(
		\displaystyle\int_0^\infty
		\bigl(r^{-\theta}K(r,x;X_0,X_1)\bigr)^q
		\,\frac{dr}{r}
		\right)^{1/q},
		& 1\le q<\infty,\\[2ex]
		\displaystyle\sup_{r>0} r^{-\theta}K(r,x;X_0,X_1),
		& q=\infty,
	\end{cases}
	\]
	is finite.
	
	\begin{remark}\label{rem:index-infinity}
		The case $q=\infty$ is the natural target in the present paper. Indeed, Morrey-in-time estimates are based on a supremum over time intervals and therefore yield uniform bounds of the form
		\[
		\sup_{r>0} r^{-\theta}K(r,u(t);X_0,X_1)\le C,
		\]
		rather than integral bounds in the interpolation variable $r$. This is exactly the norm of the weak real interpolation space $(X_0,X_1)_{\theta,\infty}$.
	\end{remark}
	
	\subsection{Interpolation scales: positive examples and Morrey-type caveats}
	
	Many classical function spaces form interpolation scales under the real method. This is the case, in particular, for Lebesgue spaces, Sobolev spaces, Besov spaces, and large classes of Triebel--Lizorkin spaces. Trace theorems on these scales are central in elliptic and parabolic analysis.
	
	By contrast, one must be more careful with global Morrey spaces. Standard Morrey spaces do not, in general, form a real interpolation scale in the same clean way as the classical scales above. However, important positive results are available in several related settings:
	\begin{enumerate}[label=\textup{(\roman*)}]
		\item Sobolev--Morrey couples admit interpolation descriptions in terms of Besov--Morrey or Nikol'skii--Besov--Morrey spaces;
		\item local Morrey-type spaces often have better interpolation behavior than global Morrey spaces;
		\item generalized Besov--Morrey and Triebel--Lizorkin--Morrey scales provide natural endpoint spaces for traces and embeddings.
	\end{enumerate}
	
	\begin{remark}\label{rem:morrey-scale-caveat}
		The abstract theorem proved below does not require Morrey spaces themselves to form an interpolation scale. We work with an arbitrary Banach couple $(X_0,X_1)$ and use only the general real-interpolation $K$-method. The Morrey structure enters solely through the time norm.
	\end{remark}
	
	\subsection{Time-Morrey spaces}
	
	\begin{definition}\label{def:time-morrey}
		Let $X$ be a Banach space, let $1<p<\infty$, let $0\le \lambda<1$, and let $T>0$. We define
		\[
		\|f\|_{\mathcal M^{p,\lambda}(0,T;X)}
		:=
		\sup_{I\subset(0,T)}
		|I|^{-\lambda/p}
		\left(\int_I \|f(t)\|_X^p\,dt\right)^{1/p},
		\]
		where the supremum is taken over all intervals $I\subset(0,T)$.
	\end{definition}
	
	\begin{definition}\label{def:holder-space}
		Let $X$ be a Banach space and let $0<\beta\le 1$. We write
		\[
		C^{0,\beta}([0,T];X)
		:=
		\Bigl\{u\in C([0,T];X):
		[u]_{C^{0,\beta}([0,T];X)}<\infty\Bigr\},
		\]
		where
		\[
		[u]_{C^{0,\beta}([0,T];X)}
		:=
		\sup_{0\le s<t\le T}
		\frac{\|u(t)-u(s)\|_X}{|t-s|^{\beta}}.
		\]
		The norm is
		\[
		\|u\|_{C^{0,\beta}([0,T];X)}
		:=
		\sup_{0\le t\le T}\|u(t)\|_X
		+
		[u]_{C^{0,\beta}([0,T];X)}.
		\]
		For $\beta=1$ this is the usual Lipschitz space. In particular, the statement
		\[
		u\in C^{0,\theta}([0,T];X_0)
		\]
		means that $u$ has a continuous $X_0$-valued representative and satisfies
		\[
		\|u(t)-u(s)\|_{X_0}\le C|t-s|^\theta
		\qquad(0\le s<t\le T).
		\]
	\end{definition}

	\begin{remark}\label{rem:lambda-limits}
		Let $X$ be a Banach space.
		
		\begin{enumerate}[label=\textup{(\roman*)}]
			\item If $\lambda=0$, then
			\[
			\mathcal M^{p,0}(0,T;X)=L^p(0,T;X)
			\]
			with equality of norms. In this regime,
			\[
			\theta=1-\frac1p.
			\]
			
			\item If $0<\lambda<1$, then the Morrey norm is stronger than the corresponding local $L^p$ control on short intervals and yields
			\[
			\int_I \|f(t)\|_{X}\,dt
			\le
			|I|^{1-\frac{1-\lambda}{p}}
			\|f\|_{\mathcal M^{p,\lambda}(0,T;X)}
			\]
			for every interval $I\subset(0,T)$.
			
			\item The trace exponent
			\[
			\theta=1-\frac{1-\lambda}{p}
			\]
			is strictly increasing in $\lambda$. As $\lambda\uparrow1$, one has $\theta\uparrow1$ and the trace space approaches the almost-strong endpoint
			\[
			(X_0,X_1)_{\theta,\infty},
			\qquad \theta\nearrow1.
			\]
			
			\item The endpoint $\lambda=1$ should not be described as a direct extension of Theorem~\ref{thm:morrey-trace-evolution}. By the Lebesgue differentiation theorem,
			\[
			\mathcal M^{p,1}(0,T;X)=L^\infty(0,T;X)
			\]
			with equivalent norms. Thus the endpoint evolution class becomes
			\[
			u\in W^{1,\infty}(0,T;X_0)\cap L^\infty(0,T;X_1).
			\]
			This gives Lipschitz continuity in $X_0$ and an essential $X_1$-bound, but it does not give strong continuity into $X_1$.
			
			\item The correct endpoint statement is therefore subcritical: for every $0<\eta<1$,
			\[
			W^{1,\infty}(0,T;X_0)\cap L^\infty(0,T;X_1)
			\hookrightarrow
			C^{0,1-\eta}([0,T];(X_0,X_1)_{\eta,\infty}).
			\]
			The endpoint $\eta=1$, which would correspond to $X_1$, generally fails without additional compactness, right-continuity, weak-continuity, or compatibility assumptions. This endpoint is treated separately in Subsection~\ref{subsec:endpoint-lambda-one}.
		\end{enumerate}
	\end{remark}

	\section{The Morrey trace theorem}\label{sec:trace}
	
	\begin{definition}\label{def:morrey-evolution-class}
		Let $(X_0,X_1)$ be a Banach couple with continuous embedding
		\[
		X_1\hookrightarrow X_0.
		\]
		Fix
		\[
		1<p<\infty,\qquad 0\le \lambda<1,\qquad T>0,
		\qquad
		\theta:=1-\frac{1-\lambda}{p}\in(0,1).
		\]
		We define
		\[
		\mathbb E^{p,\lambda}(0,T;X_0,X_1)
		:=
		\Bigl\{
		u:\;
		u\in \mathcal M^{p,\lambda}(0,T;X_1),\;
		\partial_t u\in \mathcal M^{p,\lambda}(0,T;X_0)
		\Bigr\},
		\]
		where $\partial_t u$ is understood in the sense of $X_0$-valued distributions.
	\end{definition}
	
	We now prove the Morrey trace theorem.

\begin{theorem}\label{thm:morrey-trace-evolution}
		Let $(X_0,X_1)$, $p$, $\lambda$, $T$, and $\theta$ be as in Definition~\ref{def:morrey-evolution-class}. Let
		\[
		u\in \mathbb E^{p,\lambda}(0,T;X_0,X_1).
		\]
		Then $u$ admits an $X_0$-valued absolutely continuous representative on $[0,T]$, still denoted by $u$, such that:
		
		\begin{enumerate}[label=\textup{(\roman*)}]
			\item for every $0\le s<t\le T$,
			\[
			u(t)-u(s)=\int_s^t \partial_t u(\tau)\,d\tau
			\qquad\text{in }X_0,
			\]
			and
			\[
			\|u(t)-u(s)\|_{X_0}
			\le
			|t-s|^\theta
			\|\partial_t u\|_{\mathcal M^{p,\lambda}(0,T;X_0)};
			\]
			
			\item for every $t\in[0,T]$,
			\[
			u(t)\in (X_0,X_1)_{\theta,\infty},
			\]
			and
			\[
			\sup_{t\in[0,T]}
			\|u(t)\|_{(X_0,X_1)_{\theta,\infty}}
			\le
			C\Bigl(
			\|u\|_{\mathcal M^{p,\lambda}(0,T;X_1)}
			+
			\|\partial_t u\|_{\mathcal M^{p,\lambda}(0,T;X_0)}
			\Bigr);
			\]
			
			\item in particular,
			\[
			u\in C^{0,\theta}([0,T];X_0)\cap L^\infty\bigl(0,T;(X_0,X_1)_{\theta,\infty}\bigr),
			\]
			and the endpoint traces
			\[
			u(0),\qquad u(T)
			\]
			are well-defined elements of $(X_0,X_1)_{\theta,\infty}$.
		\end{enumerate}
	\end{theorem}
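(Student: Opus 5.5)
The plan is to obtain the representative, the Hölder bound, and the interpolation bound by averaging over short time intervals.

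\textbf{Step 1 (representative and Hölder bound).} Since $(0,T)$ is itself an admissible interval, the Morrey norm dominates the $L^p(0,T)$ norm up to the factor $T^{\lambda/p}$. Hence $\partial_t u\in L^p(0,T;X_0)\subset L^1(0,T;X_0)$, and $u\in L^p(0,T;X_1)\hookrightarrow L^1(0,T;X_0)$. By the standard vector-valued fact that $W^{1,1}(0,T;X_0)$ functions have absolutely continuous representatives (fundamental theorem of calculus for Bochner integrals), $u$ has a representative with $u(t)-u(s)=\int_s^t\partial_t u\,d\tau$ in $X_0$. Applying Remark~\ref{rem:lambda-limits}(ii) with $I=(s,t)$ then gives
\[
\|u(t)-u(s)\|_{X_0}\le |t-s|^{1-\frac{1-\lambda}{p}}\|\partial_t u\|_{\mathcal M^{p,\lambda}(0,T;X_0)}=|t-s|^\theta\|\partial_t u\|_{\mathcal M^{p,\lambda}}.
\]
This is (i), and it immediately yields the $C^{0,\theta}([0,T];X_0)$ part of (iii).

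\textbf{Step 2 (bounding the $K$-functional for small $r$).} Fix $t\in[0,T]$ and $r\in(0,T]$, and choose an interval $I_r\subset[0,T]$ of length $r$ containing $t$ (for instance $[t,t+r]$ or $[t-r,t]$). Split
\[
u(t)=\underbrace{\frac1r\int_{I_r}\bigl(u(t)-u(\tau)\bigr)\,d\tau}_{x_0}+\underbrace{\frac1r\int_{I_r}u(\tau)\,d\tau}_{x_1}.
\]
By Step 1,
\[
\|x_0\|_{X_0}\le \sup_{\tau\in I_r}\|u(t)-u(\tau)\|_{X_0}\le r^\theta\|\partial_tu\|_{\mathcal M^{p,\lambda}}.
\]
The averaged piece $x_1$ is a Bochner integral in $X_1$, so Hölder's inequality and the Morrey definition give
\[
\|x_1\|_{X_1}\le r^{-1}r^{1-1/p}\Bigl(\int_{I_r}\|u\|_{X_1}^p\Bigr)^{1/p}\le r^{-1/p+\lambda/p}\|u\|_{\mathcal M^{p,\lambda}(X_1)}=r^{\theta-1}\|u\|_{\mathcal M^{p,\lambda}(X_1)}.
\]
Proposition~\ref{prop:K-basic}(iv) therefore yields
\[
K(r,u(t))\le r^\theta\bigl(\|\partial_tu\|_{\mathcal M^{p,\lambda}}+\|u\|_{\mathcal M^{p,\lambda}(X_1)}\bigr).
\]
This is the scaling heart of the argument: the $X_1$-average costs exactly $r^{\theta-1}$ precisely because $\theta=1-(1-\lambda)/p$.

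\textbf{Step 3 (bounding the $K$-functional for large $r$).} For $r>T$, use Proposition~\ref{prop:K-basic}(ii) and (iv) with the trivial decomposition to get $K(r,u(t))\le\|u(t)\|_{X_0}$. Then bound $\|u(t)\|_{X_0}$ by $\|x_0\|_{X_0}+c\|x_1\|_{X_1}$ evaluated at $r=T$, which is at most $C_T(\ldots)$. Consequently $r^{-\theta}K(r,u(t))\le T^{-\theta}C_T(\ldots)$ for $r>T$. Taking the supremum over both ranges of $r$ gives (ii), with a constant $C$ depending on $T$, $\theta$, and the embedding constant of $X_1\hookrightarrow X_0$.

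\textbf{Step 4 (remaining assertions and the main obstacle).} The remaining claims of (iii) follow at once: the bound in (ii) holds for every $t$, including $t=0$ and $t=T$, and the resulting $t\mapsto u(t)$ is $X_0$-continuous, hence measurable. I expect the main care point to be Step 1, specifically justifying the absolutely continuous representative for $X_0$-valued distributional derivatives. I would cite the standard Bochner $W^{1,1}$ result, or prove it directly by mollification, rather than re-deriving it. A secondary point is measurability of $u$ as an $L^\infty$ function into $(X_0,X_1)_{\theta,\infty}$. If strong measurability into that space fails (it is non-separable in general), I would interpret the $L^\infty$ membership in (iii) as the pointwise uniform bound established in (ii).
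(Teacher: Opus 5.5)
Your proof is correct and is essentially the paper's argument. You get the $W^{1,1}(0,T;X_0)$ representative and the H\"older bound from the Morrey--H\"older estimate, then split $u(t)$ into the $X_1$-valued average over an interval of length $r$ plus an $X_0$-remainder controlled by (i), and handle large $r$ through $K(r,u(t))\le\|u(t)\|_{X_0}$. Your two variations are only cosmetic simplifications: you use an arbitrary interval of length $r\le T$ containing $t$ rather than the paper's one-sided intervals for $r\le T/2$, and you bound $\|u(t)\|_{X_0}$ via the scale-$T$ average rather than a ``good point'' $\tau$. Your measurability caveat in (iii) can be dropped. The function $u$ is strongly measurable as an $X_1$-valued function and agrees a.e.\ with the continuous representative, and $X_1\hookrightarrow (X_0,X_1)_{\theta,\infty}$ continuously. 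Hence $u$ is strongly measurable into $(X_0,X_1)_{\theta,\infty}$, and the $L^\infty$ membership holds as stated.
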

	
	\begin{proof}
		We divide the proof into four steps.
		
		\medskip
		\noindent
		\textit{Step 1: Morrey control implies local $L^1$ control.}
		Let $X$ be a Banach space and let $f\in \mathcal M^{p,\lambda}(0,T;X)$. For any interval $I\subset(0,T)$, Hölder's inequality gives
		\[
		\int_I \|f(t)\|_X\,dt
		\le
		|I|^{1-\frac1p}
		\left(\int_I \|f(t)\|_X^p\,dt\right)^{1/p}.
		\]
		By Definition~\ref{def:time-morrey},
		\[
		\left(\int_I \|f(t)\|_X^p\,dt\right)^{1/p}
		\le
		|I|^{\lambda/p}\|f\|_{\mathcal M^{p,\lambda}(0,T;X)}.
		\]
		Hence
		\[
		\int_I \|f(t)\|_X\,dt
		\le
		|I|^{1-\frac{1-\lambda}{p}}
		\|f\|_{\mathcal M^{p,\lambda}(0,T;X)}
		=
		|I|^\theta
		\|f\|_{\mathcal M^{p,\lambda}(0,T;X)}.
		\]
		Therefore
		\[
		\mathcal M^{p,\lambda}(0,T;X)\hookrightarrow L^1(0,T;X)
		\]
		continuously on bounded intervals.
		
		Applying this with $X=X_0$ and using $X_1\hookrightarrow X_0$, we get
		\[
		u\in L^1(0,T;X_0),
		\qquad
		\partial_t u\in L^1(0,T;X_0).
		\]
		Thus
		\[
		u\in W^{1,1}(0,T;X_0),
		\]
		hence $u$ admits an absolutely continuous $X_0$-valued representative on $[0,T]$.
		
		\medskip
		\noindent
		\textit{Step 2: Hölder continuity in $X_0$.}
		Since $u\in W^{1,1}(0,T;X_0)$, for every $0\le s<t\le T$,
		\[
		u(t)-u(s)=\int_s^t \partial_t u(\tau)\,d\tau
		\qquad\text{in }X_0.
		\]
		Therefore
		\[
		\|u(t)-u(s)\|_{X_0}
		\le
		\int_s^t \|\partial_t u(\tau)\|_{X_0}\,d\tau
		\le
		|t-s|^\theta
		\|\partial_t u\|_{\mathcal M^{p,\lambda}(0,T;X_0)}.
		\]
		This proves \textup{(i)}.
		
		\medskip
		\noindent
		\textit{Step 3: Pointwise $K$-functional estimate.}
		Fix $t\in[0,T]$. For admissible $h>0$, define
		\[
		b_h(t):=\frac1h\int_t^{t+h}u(\tau)\,d\tau\in X_1,
		\qquad
		a_h(t):=u(t)-b_h(t)\in X_0,
		\]
		or the analogous backward average near $T$. Then
		\[
		u(t)=a_h(t)+b_h(t).
		\]
		
		Since
		\[
		u(\tau)-u(t)=\int_t^\tau \partial_t u(s)\,ds,
		\]
		we obtain
		\[
		\|a_h(t)\|_{X_0}
		\le
		h^\theta
		\|\partial_t u\|_{\mathcal M^{p,\lambda}(0,T;X_0)}.
		\]
		Also,
		\[
		\|b_h(t)\|_{X_1}
		\le
		h^{-(1-\theta)}
		\|u\|_{\mathcal M^{p,\lambda}(0,T;X_1)}.
		\]
		
		Thus, by Proposition~\ref{prop:K-basic},
		\[
		K(r,u(t);X_0,X_1)
		\le
		h^\theta A + r h^{-(1-\theta)} B,
		\]
		where
		\[
		A:=\|\partial_t u\|_{\mathcal M^{p,\lambda}(0,T;X_0)},
		\qquad
		B:=\|u\|_{\mathcal M^{p,\lambda}(0,T;X_1)}.
		\]
		We split the interpolation scales into two ranges.  If $0<r\le T/2$, then for every $t\in[0,T]$ at least one of the intervals $[t,t+r]$ and $[t-r,t]$ is contained in $[0,T]$.  Using the corresponding forward or backward average and choosing $h=r$ gives
		\[
		r^{-\theta}K(r,u(t);X_0,X_1)
		\le C(A+B),
		\qquad 0<r\le T/2,
		\]
		uniformly in $t$.

		For the remaining scales we use the $X_0$ part of the $K$-functional.  The Morrey bound on the full interval gives
		\[
		\left(\int_0^T\|u(s)\|_{X_1}^p\,ds\right)^{1/p}
		\le T^{\lambda/p}B.
		\]
		Hence there is a point $\tau\in(0,T)$ at which the $X_1$-valued representative is defined and
		\[
		\|u(\tau)\|_{X_1}
		\le T^{-(1-\lambda)/p}B
		=T^{\theta-1}B.
		\]
		Writing $c_{10}$ for the norm of the embedding $X_1\hookrightarrow X_0$, Step~2 yields
		\[
		\sup_{t\in[0,T]}\|u(t)\|_{X_0}
		\le c_{10}T^{\theta-1}B+T^\theta A.
		\]
		Therefore, if $r>T/2$,
		\[
		r^{-\theta}K(r,u(t);X_0,X_1)
		\le r^{-\theta}\|u(t)\|_{X_0}
		\le C_{T,c_{10}}(A+B).
		\]
		Combining the two scale ranges, we obtain
		\[
		\sup_{r>0}r^{-\theta}K(r,u(t);X_0,X_1)
		\le C_{p,\lambda,T,c_{10}}(A+B),
		\]
		uniformly in $t$.
		
		By the definition of the real interpolation norm,
		\[
		\|u(t)\|_{(X_0,X_1)_{\theta,\infty}}
		\le
		C(A+B).
		\]
		This proves \textup{(ii)}.
		
		\medskip
		\noindent
		\textit{Step 4: Endpoint traces.}
		Since $u\in C([0,T];X_0)$ by Step~2, the values $u(0)$ and $u(T)$ are well defined in $X_0$. Step~3 shows they belong in fact to $(X_0,X_1)_{\theta,\infty}$ and satisfy the same estimate. This proves \textup{(iii)}.
	\end{proof}
	
	\subsection{Exactness and sharpness of the Morrey trace space}\label{subsec:morrey-trace-sharpness}

	The preceding theorem gives the embedding direction of the Morrey trace theorem.  In the genuinely Morrey regime the target space is not only a convenient weak endpoint; it is the exact trace range.  The case \(\lambda=0\) is exceptional: then \(\mathcal M^{p,0}=L^p\), and the classical trace theorem identifies the sharper trace space \((X_0,X_1)_{1-1/p,p}\).  The theorem below therefore concerns \(0<\lambda<1\).

	We next identify the full trace range.

\begin{theorem}\label{thm:exact-morrey-trace-space}
	Let \((X_0,X_1)\) be a Banach couple with continuous embedding \(X_1\hookrightarrow X_0\).  Let
	\[
	1<p<\infty,\qquad 0<\lambda<1,\qquad T>0,
	\qquad
	\theta=1-\frac{1-\lambda}{p}.
	\]
	Then the trace at \(t=0\) of the Morrey evolution class is exactly
	\[
	\operatorname{Tr}_{0}\mathbb E^{p,\lambda}(0,T;X_0,X_1)
	=
	(X_0,X_1)_{\theta,\infty}.
	\]
	More precisely, the trace map
	\[
	\operatorname{Tr}_{0}:u\mapsto u(0)
	\]
	is bounded from \(\mathbb E^{p,\lambda}(0,T;X_0,X_1)\), equipped with its natural norm, into \((X_0,X_1)_{\theta,\infty}\).  Conversely, for every \(x\in (X_0,X_1)_{\theta,\infty}\) there exists
	\[
	u\in \mathbb E^{p,\lambda}(0,T;X_0,X_1)
	\]
	such that \(u(0)=x\) in \(X_0\), and
	\[
	\|u\|_{\mathcal M^{p,\lambda}(0,T;X_1)}
	+
	\|u'\|_{\mathcal M^{p,\lambda}(0,T;X_0)}
	\le
	C\|x\|_{(X_0,X_1)_{\theta,\infty}}.
	\]
	The constant \(C\) depends only on \(p,\lambda,T\), and the embedding constant of \(X_1\hookrightarrow X_0\).
	\end{theorem}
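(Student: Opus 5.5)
The bounded direction follows from Theorem~\ref{thm:morrey-trace-evolution}, applied with $t=0$: part (ii) there gives $\|u(0)\|_{(X_0,X_1)_{\theta,\infty}}\le C(\|u\|_{\mathcal M^{p,\lambda}(0,T;X_1)}+\|\partial_t u\|_{\mathcal M^{p,\lambda}(0,T;X_0)})$, and the constant depends only on $p,\lambda,T$ and $c_{10}$. Hence $\operatorname{Tr}_0$ is bounded into $(X_0,X_1)_{\theta,\infty}$. All the remaining work is the construction of an extension for a given $x\in(X_0,X_1)_{\theta,\infty}$. I will build it from near-optimal $K$-decompositions at dyadic scales, interpolated linearly in time.

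Set $N:=\|x\|_{(X_0,X_1)_{\theta,\infty}}$ and $t_k:=2^{-k}T$ for $k\ge0$. For each $k$ choose $x=a_k+b_k$ with $a_k\in X_0$, $b_k\in X_1$ and
\[
\|a_k\|_{X_0}+t_k\|b_k\|_{X_1}\le 2K(t_k,x)\le 2N t_k^{\theta}.
\]
This gives $\|b_k\|_{X_1}\le 2Nt_k^{\theta-1}$ and $\|a_k\|_{X_0}\le 2Nt_k^\theta$. Define $u(t):=b_0$ for $t\ge t_0=T$; on $[t_{k+1},t_k]$ let $u$ be the affine interpolation between $b_{k+1}$ at $t_{k+1}$ and $b_k$ at $t_k$. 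Then $u(t)\in X_1$ for every $t>0$, and since $t\sim t_k$ on each piece,
\[
\|u(t)\|_{X_1}\le\max\{\|b_k\|_{X_1},\|b_{k+1}\|_{X_1}\}\lesssim N t^{\theta-1}.
\]
On the open piece the derivative is $u'(t)=(b_k-b_{k+1})/(t_k-t_{k+1})$. The key identity is $b_k-b_{k+1}=a_{k+1}-a_k$, so this difference is controlled in $X_0$ even though each $b_k$ alone is only controlled in $X_1$. It yields
\[
\|u'(t)\|_{X_0}\lesssim N t_k^{\theta}/t_k\lesssim N t^{\theta-1}.
\]
Moreover $u$ is continuous into $X_0$ and locally Lipschitz on $(0,T]$, so the classical derivative coincides with the distributional one on $(0,T)$. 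Finally, $\|x-u(t)\|_{X_0}\le\max\{\|a_k\|_{X_0},\|a_{k+1}\|_{X_0}\}\to0$ as $t\to0$, so the continuous representative satisfies $u(0)=x$ in $X_0$. Because $t^{\theta-1}$ is integrable near $0$, $u'\in L^1(0,T;X_0)$ and $u$ lies in $W^{1,1}(0,T;X_0)$ with this trace.

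It remains to show that the weight $w(t)=t^{\theta-1}=t^{-(1-\lambda)/p}$ lies in $\mathcal M^{p,\lambda}(0,T)$. This is the step I regard as the crux, because it is exactly where $\lambda>0$ is used. Since $w^p=t^{\lambda-1}$ is decreasing, for any interval $I\subset(0,T)$ we have
\[
\int_I w^p\,dt\le\int_0^{|I|}t^{\lambda-1}\,dt=\lambda^{-1}|I|^{\lambda},
\]
so $\|w\|_{\mathcal M^{p,\lambda}}\le\lambda^{-1/p}$. For $\lambda=0$ this bound fails logarithmically, which is consistent with the classical $(\cdot,\cdot)_{1-1/p,p}$ trace.

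Combining the pointwise bounds with this weight estimate gives
\[
\|u\|_{\mathcal M^{p,\lambda}(0,T;X_1)}+\|u'\|_{\mathcal M^{p,\lambda}(0,T;X_0)}\le C N,
\]
with $C$ depending only on $p,\lambda,T$. The only delicate bookkeeping is uniformity of the constants in $k$ (from $t_k/t_{k+1}=2$) and the choice of near-minimizers at each scale; that choice is harmless because no measurable selection is needed for countably many scales.
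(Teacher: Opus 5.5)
Your proposal is correct and follows essentially the same route as the paper. The bounded direction is quoted from Theorem~\ref{thm:morrey-trace-evolution}(ii), and the extension is built as in the paper: near-optimal $K$-decompositions at the dyadic scales $T2^{-k}$, linear interpolation between consecutive $b_k$, the identity $b_k-b_{k+1}=a_{k+1}-a_k$ to control $u'$ in $X_0$, and membership of $t^{-(1-\lambda)/p}$ in $\mathcal M^{p,\lambda}(0,T)$ to close the estimate.

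The differences are cosmetic. You bound the weight in one line using that $t^{\lambda-1}$ is decreasing, which gives $\lambda^{-1/p}$, where the paper splits into the cases $a\le h$ and $a>h$. You also spell out the distributional-derivative and $W^{1,1}$ bookkeeping that the paper leaves implicit.
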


	\begin{proof}
	The boundedness of the trace map is exactly the trace estimate in Theorem~\ref{thm:morrey-trace-evolution}.  We prove the converse by an explicit dyadic extension.

	Let
	\[
	M:=\|x\|_{(X_0,X_1)_{\theta,\infty}}.
	\]
	For \(n=0,1,2,\ldots\), put
	\[
	r_n:=T2^{-n}.
	\]
	By the definition of the \((X_0,X_1)_{\theta,\infty}\)-norm, for each \(n\) we may choose a decomposition
	\[
	x=a_n+b_n,
	\qquad a_n\in X_0,
	\qquad b_n\in X_1,
	\]
	such that
	\[
	\|a_n\|_{X_0}+r_n\|b_n\|_{X_1}
	\le
	2K(r_n,x;X_0,X_1)
	\le
	2Mr_n^\theta.
	\]
	Therefore
	\[
	\|x-b_n\|_{X_0}=\|a_n\|_{X_0}\le 2Mr_n^\theta,
	\qquad
	\|b_n\|_{X_1}\le 2Mr_n^{\theta-1}.
	\]
	In particular \(b_n\to x\) in \(X_0\).

	Define \(u\) on each dyadic interval \((r_{n+1},r_n]\) by linear interpolation between \(b_{n+1}\) and \(b_n\):
	\[
	u(t):=
	\frac{t-r_{n+1}}{r_n-r_{n+1}}b_n
	+
	\frac{r_n-t}{r_n-r_{n+1}}b_{n+1},
	\qquad r_{n+1}<t\le r_n.
	\]
	Set \(u(0):=x\).  Since \(b_n\to x\) in \(X_0\), the resulting function is continuous at \(0\) as an \(X_0\)-valued function.  For \(t\in(r_{n+1},r_n]\), the preceding estimates give
	\[
	\|u(t)\|_{X_1}
	\le
	\max\{\|b_n\|_{X_1},\|b_{n+1}\|_{X_1}\}
	\le
	C M r_n^{\theta-1}.
	\]
	Since \(t\simeq r_n\) on \((r_{n+1},r_n]\), and \(\theta-1=-(1-\lambda)/p\), we have
	\[
	\|u(t)\|_{X_1}
	\le
	C M t^{-\frac{1-\lambda}{p}}.
	\]
	Similarly, on \((r_{n+1},r_n)\),
	\[
	u'(t)=\frac{b_n-b_{n+1}}{r_n-r_{n+1}}.
	\]
	Using \(b_n-x=-a_n\) and \(b_{n+1}-x=-a_{n+1}\),
	\[
	\|b_n-b_{n+1}\|_{X_0}
	\le
	\|a_n\|_{X_0}+\|a_{n+1}\|_{X_0}
	\le
	C M r_n^\theta.
	\]
	Since \(r_n-r_{n+1}=r_{n+1}\simeq r_n\), it follows that
	\[
	\|u'(t)\|_{X_0}
	\le
	C M r_n^{\theta-1}
	\le
	C M t^{-\frac{1-\lambda}{p}}.
	\]

	It remains only to observe that the scalar function
	\[
	g(t)=t^{-\frac{1-\lambda}{p}},\qquad 0<t<T,
	\]
	belongs to \(\mathcal M^{p,\lambda}(0,T)\) when \(0<\lambda<1\).  Indeed, if \(I=(a,b)\subset(0,T)\) and \(h=b-a\), then
	\[
	\int_I g(t)^p\,dt=\int_a^b t^{-(1-\lambda)}\,dt.
	\]
	If \(a\le h\), then this is bounded by \(C h^\lambda\).  If \(a>h\), then
	\[
	\int_a^b t^{-(1-\lambda)}\,dt
	\le
	h a^{-(1-\lambda)}
	\le
	h h^{-(1-\lambda)}=h^\lambda.
	\]
	Thus
	\[
	|I|^{-\lambda/p}
	\left(\int_I g(t)^p\,dt\right)^{1/p}
	\le C,
	\]
	uniformly in \(I\).  Hence \(u\in\mathcal M^{p,\lambda}(0,T;X_1)\) and \(u'\in\mathcal M^{p,\lambda}(0,T;X_0)\), with the asserted estimate.
	\end{proof}

	\begin{proposition}\label{prop:sharp-morrey-holder-exponent}
	The exponent
	\[
	\theta=1-\frac{1-\lambda}{p}
	\]
	in the \(X_0\)-valued H\"older conclusion of Theorem~\ref{thm:morrey-trace-evolution} cannot be improved.
	\end{proposition}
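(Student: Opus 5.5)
To show that $\theta$ cannot be improved, I will build one explicit element of $\mathbb E^{p,\lambda}(0,T;X_0,X_1)$ whose $X_0$-valued modulus of continuity at $t=0$ is of exact order $t^\theta$. The simplest choice is the trivial couple $X_0=X_1=\mathbb R$ (or $\mathbb C$), with $u$ scalar. Any Banach couple with $X_1\hookrightarrow X_0$ would do as well, by taking $u(t)=\phi(t)e$ for a fixed $e\in X_1\setminus\{0\}$. Since both norms are then multiples of the scalar Morrey norms of $\phi$ and $\phi'$, the whole question reduces to a scalar function on $(0,T)$. I take
\[
\phi(t)=t^{\theta}=t^{1-\frac{1-\lambda}{p}},\qquad 0<t<T.
\]

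The verification has two parts. First, $\phi'(t)=\theta\,t^{\theta-1}=\theta\,t^{-\frac{1-\lambda}{p}}$, which is a constant multiple of the function $g$ used at the end of the proof of Theorem~\ref{thm:exact-morrey-trace-space}. There it was shown that $g\in\mathcal M^{p,\lambda}(0,T)$ for $0<\lambda<1$, through the case split $a\le h$ versus $a>h$ for $I=(a,b)$. For $\lambda=0$ the same function fails to be in $L^p$, so in that case I would instead use $t^{\theta}\log(1/t)^{-\gamma}$ or simply note that the $\lambda=0$ case is classical. Second, $\phi$ is bounded by $T^\theta$, so $\int_I|\phi|^p\le T^{\theta p}|I|$. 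Since $|I|^{1-\lambda}\le T^{1-\lambda}$, it follows that $\phi\in\mathcal M^{p,\lambda}(0,T)$. Hence $u\in\mathbb E^{p,\lambda}$.

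Now suppose $\beta>\theta$ and the $X_0$-Hölder conclusion of Theorem~\ref{thm:morrey-trace-evolution}\,(i) held with exponent $\beta$, in the sense that some $C$ gives
\[
|u(t)-u(s)|\le C|t-s|^\beta\bigl(\|u\|_{\mathcal M^{p,\lambda}(0,T;X_1)}+\|u'\|_{\mathcal M^{p,\lambda}(0,T;X_0)}\bigr)
\]
for all elements of the class. Taking $s=0$ gives $t^\theta\le C' t^\beta$ for all small $t>0$, that is $t^{\theta-\beta}\le C'$. This is false as $t\to0^+$. Even without a uniform bound, the single function $u$ already fails to lie in $C^{0,\beta}([0,T];X_0)$, which proves the claim in its strongest form.

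The main obstacle is stating precisely what "cannot be improved" means. Beyond that, the only nontrivial computation is the Morrey membership of $t^{-(1-\lambda)/p}$, and that was already done in the preceding proof.
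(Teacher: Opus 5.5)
Your argument is the paper's own. You use the scalar couple with $u(t)=c\,t^{\theta}$, whose derivative is a multiple of $t^{-(1-\lambda)/p}\in\mathcal M^{p,\lambda}$ by the exact-trace proof, together with a logarithmic correction at $\lambda=0$. There the paper fixes $f(t)=t^{-1/p}(\log(e/t))^{-1}$, which is essentially your $\gamma=1$; in general you need $\gamma p>1$, and $\log(e/t)$ in place of $\log(1/t)$ to avoid the blow-up at $t=1$. Your separate check that $u$ itself lies in $\mathcal M^{p,\lambda}(0,T;X_1)$, and your remark that $u=\phi e$ works for any couple, fill in points the paper leaves implicit.
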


	\begin{proof}
	For \(0<\lambda<1\), take \(X_0=X_1=\mathbb R\) and
	\[
	f(t)=t^{-\frac{1-\lambda}{p}},
	\qquad
	u(t)=\int_0^t f(s)\,ds=C_{p,\lambda}t^\theta.
	\]
	As shown in the proof of Theorem~\ref{thm:exact-morrey-trace-space}, \(f\in\mathcal M^{p,\lambda}(0,1)\).  Hence \(u'\in\mathcal M^{p,\lambda}(0,1)\), while \(u\) is not \(\beta\)-H\"older at \(0\) for any \(\beta>\theta\).

	For \(\lambda=0\), the borderline power \(t^{-1/p}\) is not in \(L^p(0,1)\).  A logarithmic correction gives the same conclusion.  Let
	\[
	f(t)=t^{-1/p}\bigl(\log(e/t)\bigr)^{-1},
	\qquad 0<t<1.
	\]
	Then \(f\in L^p(0,1)=\mathcal M^{p,0}(0,1)\), because
	\[
	\int_0^1 t^{-1}\bigl(\log(e/t)\bigr)^{-p}\,dt<\infty
	\]
	for \(p>1\).  Its primitive satisfies, up to harmless constants,
	\[
	u(t)\simeq t^{1-1/p}\bigl(\log(e/t)\bigr)^{-1}
	\]
	as \(t\downarrow0\).  Therefore \(u\) is not \(\beta\)-H\"older at \(0\) for any \(\beta>1-1/p\).  This proves sharpness also in the classical \(\lambda=0\) endpoint of the Morrey scale.
	\end{proof}

	\begin{proposition}\label{prop:q-infty-forced}
	Let \(0<\lambda<1\), \(1<p<\infty\), and \(\theta=1-(1-\lambda)/p\).  In general, the trace target in Theorem~\ref{thm:morrey-trace-evolution} cannot be replaced by \((X_0,X_1)_{\theta,q}\) for any finite \(q\).
	\end{proposition}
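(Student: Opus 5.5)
Because Theorem~\ref{thm:exact-morrey-trace-space} already shows that every element of \((X_0,X_1)_{\theta,\infty}\) is the trace of some element of \(\mathbb E^{p,\lambda}\), the plan is to pick a single concrete couple in which \((X_0,X_1)_{\theta,\infty}\) is strictly larger than \((X_0,X_1)_{\theta,q}\) for every finite \(q\). Any \(x\) in the difference, extended by Theorem~\ref{thm:exact-morrey-trace-space}, then gives \(u\in\mathbb E^{p,\lambda}\) with \(u(0)\notin(X_0,X_1)_{\theta,q}\). That rules out any replacement of the target by \((X_0,X_1)_{\theta,q}\), with or without a norm estimate.

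For the couple I would take weighted sequence spaces: \(X_0=\ell^\infty\) and \(X_1=\ell^\infty(2^{k})\), indexed by \(k\ge0\), with norm \(\sup_k 2^k|x_k|\). Then \(X_1\hookrightarrow X_0\) with constant \(1\). A cleaner alternative is \(X_0=\ell^1\) and \(X_1=\ell^1(2^k)\). For that couple the standard computation (Peetre, Bergh--L\"ofstr\"om) gives \((X_0,X_1)_{\theta,q}=\ell^q(2^{k\theta})\) with equivalent norms, for all \(1\le q\le\infty\). The candidate element is \(x_k:=2^{-k\theta}\). It has \(\sup_k 2^{k\theta}|x_k|=1\), but \(\sum_k(2^{k\theta}|x_k|)^q=\infty\) for every finite \(q\).

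To stay self-contained and avoid quoting the identification of interpolation spaces, I would check the two facts directly from the \(K\)-functional. For the upper bound, split \(x\) at the index \(k\approx\log_2(1/r)\): put the tail \(k>\log_2(1/r)\) into \(X_0\) and the head into \(X_1\). This gives \(K(r,x)\lesssim r^\theta\) for \(r\le1\), and \(K(r,x)\le\|x\|_{X_0}\) handles \(r>1\). For the lower bound, use the coordinatewise inequality \(K(r,x)\ge\sum_k\min(1,r2^k)|x_k|\), valid for the \(\ell^1\) couple. It shows \(K(r,x)\gtrsim r^\theta\) for all \(r\le1\), so \(\int_0^1(r^{-\theta}K(r,x))^q\,dr/r=\infty\).

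The main obstacle is this lower bound, which must hold at every scale \(r\) and not just along a sequence. If the coordinatewise minimum inequality is awkward to justify in the \(\ell^\infty\) version, I would use the \(\ell^1\) couple, where it follows immediately from the triangle inequality applied coordinatewise: \(|x_k|\le|y_k|+|z_k|\) for any decomposition \(x=y+z\). A scalar alternative would be to show directly that \(t\mapsto r^{-\theta}K(r,u(0))\) cannot be integrable, but the sequence model is the most transparent choice.
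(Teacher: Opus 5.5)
Your proposal is correct and follows essentially the same route as the paper. Both reduce, via Theorem~\ref{thm:exact-morrey-trace-space}, to exhibiting a vector in $(X_0,X_1)_{\theta,\infty}\setminus(X_0,X_1)_{\theta,q}$, take a diagonal couple of sequence spaces with weights $2^k$, choose $x_k=2^{-k\theta}$, and prove $K(r,x)\simeq r^\theta$ for $r\le 1$. The only difference is cosmetic: the paper uses $X_0=c_0$ with the sup-weighted $X_1$ and $K(r,x)\simeq\sup_n\min\{1,r2^n\}|x_n|$, while your preferred $\ell^1$ couple gives the exact additive formula $\sum_k\min\{1,r2^k\}|x_k|$.
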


	\begin{proof}
	By Theorem~\ref{thm:exact-morrey-trace-space}, it suffices to exhibit a Banach couple and a vector which belongs to \((X_0,X_1)_{\theta,\infty}\) but not to \((X_0,X_1)_{\theta,q}\) for any finite \(q\).

	Let
	\[
	X_0=c_0,
	\qquad
	X_1=\Bigl\{x=(x_n)_{n\ge1}:\sup_{n\ge1}2^n|x_n|<\infty\Bigr\},
	\]
	with the natural norms
	\[
	\|x\|_{X_0}=\sup_n |x_n|,
	\qquad
	\|x\|_{X_1}=\sup_n 2^n|x_n|.
	\]
	Then \(X_1\hookrightarrow X_0\).  For this diagonal couple, the scalar coordinate calculation gives
	\[
	K(r,x;X_0,X_1)
	\simeq
	\sup_{n\ge1}\min\{1,r2^n\}|x_n|.
	\]
	Now take
	\[
	x_n=2^{-\theta n}.
	\]
	Then \(x\in c_0\), and
	\[
	\sup_{r>0}r^{-\theta}K(r,x;X_0,X_1)<\infty.
	\]
	Indeed, if \(2^{-(N+1)}<r\le2^{-N}\), then
	\[
	\sup_n \min\{1,r2^n\}2^{-\theta n}
	\lesssim r^\theta.
	\]
	Hence \(x\in (X_0,X_1)_{\theta,\infty}\).

	On the other hand, for the same \(r\)-range one also has the lower estimate
	\[
	K(r,x;X_0,X_1)
	\gtrsim r^\theta.
	\]
	Consequently
	\[
	\int_0^1\bigl(r^{-\theta}K(r,x;X_0,X_1)\bigr)^q\,\frac{dr}{r}
	=\infty
	\]
	for every finite \(q\).  Thus \(x\notin (X_0,X_1)_{\theta,q}\) for \(q<\infty\).  Since \(x\) is nevertheless the trace of a function in \(\mathbb E^{p,\lambda}\) by Theorem~\ref{thm:exact-morrey-trace-space}, no finite interpolation index can replace \(\infty\) in general.
	\end{proof}

	\begin{remark}\label{rem:no-continuity-trace-space}
	The exact trace theorem identifies the range of the pointwise trace map.  It should not be read as a strong-continuity statement with values in \((X_0,X_1)_{\theta,\infty}\).  In general, Morrey control gives an \(X_0\)-continuous representative and uniformly bounded \((X_0,X_1)_{\theta,\infty}\)-traces.  Strong continuity in the weak interpolation norm requires additional hypotheses, such as compactness, stronger summability, or an independent continuity assumption in the interpolation space.
	\end{remark}

	\subsection{The endpoint \texorpdfstring{$\lambda=1$}{lambda=1} and BMO trace consequences}\label{subsec:endpoint-lambda-one}

	The endpoint $\lambda=1$ is not an endpoint of Theorem~\ref{thm:morrey-trace-evolution} in the literal sense $\theta=1$. It is a different theorem. With the normalization of Definition~\ref{def:time-morrey}, the endpoint Morrey space is $L^\infty$; hence the natural endpoint evolution class is
	\[
	\mathbb E^\infty(0,T;X_0,X_1)
	:=
	W^{1,\infty}(0,T;X_0)\cap L^\infty(0,T;X_1).
	\]
	The conclusion is continuity into every subcritical interpolation space, but not generally into $X_1$.

	\begin{lemma}\label{lem:morrey-endpoint-linfty}
	Let $X$ be a Banach space and $1<p<\infty$. If
	\[
	\|g\|_{\mathcal M^{p,1}(0,T;X)}
	:=
	\sup_{I\subset(0,T)} |I|^{-1/p}
	\left(\int_I\|g(t)\|_X^p\,dt\right)^{1/p},
	\]
	then
	\[
	\mathcal M^{p,1}(0,T;X)=L^\infty(0,T;X)
	\]
	with equivalent norms.
	\end{lemma}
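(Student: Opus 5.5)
We prove the two inclusions $L^\infty(0,T;X)\hookrightarrow\mathcal M^{p,1}(0,T;X)$ and $\mathcal M^{p,1}(0,T;X)\hookrightarrow L^\infty(0,T;X)$ separately; in fact we expect equality of norms, not merely equivalence.

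\emph{Easy direction.} Let $g\in L^\infty(0,T;X)$ and let $I\subset(0,T)$ be an interval. Then
\[
\int_I\|g(t)\|_X^p\,dt\le |I|\,\|g\|_{L^\infty(0,T;X)}^p .
\]
Multiplying the $p$-th root of this bound by $|I|^{-1/p}$ and taking the supremum over $I$ gives
\[
\|g\|_{\mathcal M^{p,1}(0,T;X)}\le\|g\|_{L^\infty(0,T;X)}.
\]

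\emph{Reverse direction.} Let $g\in\mathcal M^{p,1}(0,T;X)$ and write $M:=\|g\|_{\mathcal M^{p,1}(0,T;X)}$. The function $g$ is strongly measurable and, by the definition of the Morrey norm with $I=(0,T)$, we have $\phi:=\|g(\cdot)\|_X^p\in L^1(0,T)$. The scalar Lebesgue differentiation theorem applies to $\phi$. Hence for almost every $t\in(0,T)$,
\[
\phi(t)=\lim_{h\downarrow0}\frac1{2h}\int_{t-h}^{t+h}\phi(s)\,ds .
\]
For each such $t$ and every $h$ small enough that $(t-h,t+h)\subset(0,T)$, the Morrey bound applied to $I=(t-h,t+h)$ gives
\[
\frac1{|I|}\int_I\phi\le M^p .
\]
Letting $h\downarrow0$ yields $\|g(t)\|_X\le M$ for almost every $t$. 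Therefore $\|g\|_{L^\infty(0,T;X)}\le M$.

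The only point needing care is that we work with the scalar function $\|g\|_X^p$. This avoids any vector-valued differentiation theorem and any separability assumption on $X$ beyond the strong measurability that is already built into the Bochner framework. Measurability of $\phi$ follows from the strong measurability of $g$. No other obstacle arises.
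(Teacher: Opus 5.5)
Your proof is correct and follows the paper's argument. The inclusion $L^\infty\hookrightarrow\mathcal M^{p,1}$ is trivial, and the reverse bound comes from the scalar Lebesgue differentiation theorem applied to $\|g(\cdot)\|_X^p$ on shrinking intervals inside $(0,T)$; you also spell out details the paper leaves implicit, namely integrability via $I=(0,T)$ and the fact that both constants equal $1$, so the norms are in fact equal.
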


	\begin{proof}
	The inclusion $L^\infty\subset\mathcal M^{p,1}$ is immediate. Conversely, the defining estimate gives uniform bounds for the $L^p$ averages of $\|g\|_X$ over all intervals. By the Lebesgue differentiation theorem applied to the scalar function $\|g(\cdot)\|_X^p$, these averages converge almost everywhere to $\|g(t)\|_X^p$, giving the $L^\infty$ bound.
	\end{proof}

	\begin{theorem}\label{thm:endpoint-subcritical-trace}
	Let $(X_0,X_1)$ be a Banach couple with $X_1\hookrightarrow X_0$. Assume
	\[
	u\in W^{1,\infty}(0,T;X_0)\cap L^\infty(0,T;X_1).
	\]
	Then, for every $0<\eta<1$,
	\[
	u\in C^{0,1-\eta}([0,T];(X_0,X_1)_{\eta,\infty}),
	\]
	and
	\begin{align*}
	&[u]_{C^{0,1-\eta}([0,T];(X_0,X_1)_{\eta,\infty})}
	\le
	C_\eta
	\|u'\|_{L^\infty(0,T;X_0)}^{1-\eta}
	\|u\|_{L^\infty(0,T;X_1)}^{\eta}.
	\end{align*}
	Moreover,
	\[
	\sup_{0\le t\le T}\|u(t)\|_{(X_0,X_1)_{\eta,\infty}}
	\le
	C_\eta
	\|u\|_{L^\infty(0,T;X_0)}^{1-\eta}
	\|u\|_{L^\infty(0,T;X_1)}^\eta.
	\]
	\end{theorem}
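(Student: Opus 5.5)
The plan is to work directly with the $K$-functional and to avoid ever evaluating $u(t)$ in $X_1$, since the hypothesis only gives an essential $X_1$-bound. Set
\[
A:=\|u'\|_{L^\infty(0,T;X_0)},\qquad B:=\|u\|_{L^\infty(0,T;X_1)}.
\]
Since $u\in W^{1,\infty}(0,T;X_0)\subset W^{1,1}(0,T;X_0)$, the argument of Step~1 in the proof of Theorem~\ref{thm:morrey-trace-evolution} gives an $X_0$-continuous, indeed Lipschitz, representative:
\[
\|u(t)-u(s)\|_{X_0}\le A|t-s|.
\]
Because $u\in L^\infty(0,T;X_1)\subset L^\infty(0,T;X_0)$, continuity also gives $\sup_t\|u(t)\|_{X_0}\le \|u\|_{L^\infty(0,T;X_0)}$, with no embedding constant needed.

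The key pointwise estimate is
\[
K(r,u(t);X_0,X_1)\le rB\qquad\text{for every }t\in[0,T],\ r>0.
\]
To obtain it I would reuse the averaging decomposition of Step~3 of Theorem~\ref{thm:morrey-trace-evolution}. Put $b_h(t)=\frac1h\int_t^{t+h}u(\tau)\,d\tau$, or the backward average near $T$. This is a Bochner integral of an essentially bounded $X_1$-valued function, so $\|b_h(t)\|_{X_1}\le B$. The Lipschitz bound gives $\|u(t)-b_h(t)\|_{X_0}\le Ah/2$. Proposition~\ref{prop:K-basic}(iv) then yields $K(r,u(t))\le Ah/2+rB$, and letting $h\downarrow0$ gives the claim. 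Combining it with $K(r,u(t))\le\|u(t)\|_{X_0}$ gives
\[
r^{-\eta}K(r,u(t))\le r^{-\eta}\min\{\|u\|_{L^\infty X_0},\,rB\}\le \|u\|_{L^\infty X_0}^{1-\eta}B^{\eta},
\]
where the last step maximizes over $r$, at the crossover $r=\|u\|_{L^\infty X_0}/B$. This is the ``moreover'' estimate.

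For the Hölder seminorm, fix $s<t$ and write $\delta=t-s$ and $w=u(t)-u(s)$. Subadditivity of $K(r,\cdot)$ (Proposition~\ref{prop:K-basic}(i)) and the pointwise estimate give $K(r,w)\le 2rB$. The trivial bound gives $K(r,w)\le\|w\|_{X_0}\le A\delta$. Hence
\[
r^{-\eta}K(r,w)\le r^{-\eta}\min\{A\delta,2rB\}\le 2^\eta A^{1-\eta}B^{\eta}\,\delta^{1-\eta}.
\]
Taking the supremum over $r$ and dividing by $\delta^{1-\eta}$ yields the asserted bound with $C_\eta=2^\eta$. Together with the sup bound this gives $u\in C^{0,1-\eta}([0,T];(X_0,X_1)_{\eta,\infty})$. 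Continuity into the interpolation space follows from the Hölder estimate itself.

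The main obstacle is the pointwise $X_1$-type control at every $t$ rather than almost every $t$. In a nonreflexive $X_1$, $u(t)$ need not lie in $X_1$ at all. The approach above circumvents this by never decomposing $u(t)$ through $u(t)$ itself, only through $X_1$-valued averages, and by using $X_0$-continuity to pass $h\to0$ inside the $K$-functional bound. This is also why the conclusion stops at $\eta<1$: at $\eta=1$ the minimization over $r$ degenerates and no $X_1$-continuity can be extracted.
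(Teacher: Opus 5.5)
Your proof is correct. It reaches the conclusion by a somewhat different route from the paper at one point: the passage from almost every time to every time.

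Both arguments rest on the same interpolation mechanism, namely $r^{-\eta}\min\{a,rb\}\le a^{1-\eta}b^{\eta}$. The paper applies the inequality $\|x\|_{(X_0,X_1)_{\eta,\infty}}\le C_\eta\|x\|_{X_0}^{1-\eta}\|x\|_{X_1}^{\eta}$ to $x=u(t)-u(s)$ and $x=u(t)$ only at times where $u(t),u(s)$ genuinely lie in $X_1$ with norm at most $\|u\|_{L^\infty(0,T;X_1)}$. It then extends to all $s,t$ by approximating with such points. That step is only sketched in the paper. It silently uses that $x\mapsto K(r,x)$ is $1$-Lipschitz for the $X_0$-norm, so that the $(X_0,X_1)_{\eta,\infty}$-norm is lower semicontinuous under $X_0$-convergence.

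You instead prove $K(r,u(t))\le r\|u\|_{L^\infty(0,T;X_1)}$ at every $t$. You do this with the averages $b_h(t)$ from Step~3 of Theorem~\ref{thm:morrey-trace-evolution}, letting $h\downarrow0$ and using the Lipschitz bound in $X_0$. You then obtain the difference estimate from subadditivity of $K(r,\cdot)$. This needs no approximation step and never assumes $u(t)\in X_1$, which matters because $u(t)$ can fail to lie in $X_1$ at exceptional times.

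Your version is therefore more self-contained and gives explicit constants: $1$ for the uniform bound and $2^{\eta}$ for the H\"older seminorm. The paper's version is shorter but leaves the extension to every pair $s,t$ as a one-line remark.
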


	\begin{proof}
	We first record the elementary interpolation inequality
	\[
	\|x\|_{(X_0,X_1)_{\eta,\infty}}
	\le
	C_\eta \|x\|_{X_0}^{1-\eta}\|x\|_{X_1}^{\eta},
	\qquad x\in X_0\cap X_1.
	\]
	Indeed,
	\[
	K(r,x;X_0,X_1)\le \min\{\|x\|_{X_0},r\|x\|_{X_1}\},
	\]
	and taking the supremum of $r^{-\eta}K(r,x)$ over $r>0$ gives the estimate.

	For almost every $s,t$ we may apply this to $x=u(t)-u(s)$. Since
	\[
	\|u(t)-u(s)\|_{X_0}
	\le |t-s|\|u'\|_{L^\infty(0,T;X_0)}
	\]
	and
	\[
	\|u(t)-u(s)\|_{X_1}
	\le 2\|u\|_{L^\infty(0,T;X_1)},
	\]
	we obtain
	\[
	\|u(t)-u(s)\|_{(X_0,X_1)_{\eta,\infty}}
	\le
	C_\eta |t-s|^{1-\eta}
	\|u'\|_{L^\infty(X_0)}^{1-\eta}
	\|u\|_{L^\infty(X_1)}^{\eta}.
	\]
	The estimate extends to every pair $s,t\in[0,T]$ by taking the $X_0$-continuous representative and approximating by Lebesgue points of the $X_1$-valued representative. The same interpolation inequality applied to $x=u(t)$ gives the uniform trace bound. This proves the theorem.
	\end{proof}

	\begin{remark}\label{rem:endpoint-no-x1}
	Theorem~\ref{thm:endpoint-subcritical-trace} reaches every subcritical space $(X_0,X_1)_{\eta,\infty}$, $0<\eta<1$. It does not generally reach $X_1$. The available endpoint information is only
	\[
	u\in L^\infty(0,T;X_1),
	\qquad
	u\in C^{0,1}([0,T];X_0),
	\]
	and these two facts do not imply $u\in C([0,T];X_1)$. Hence $\lambda=1$ is included through a different endpoint theorem rather than by inserting $\theta=1$ into the Morrey trace theorem.
	\end{remark}

	\subsubsection{BMO endpoint trace consequences}\label{subsubsec:bmo-trace}

	The endpoint of Calder\'on--Zygmund theory is naturally expressed in terms of \(BMO\).  The following trace consequences are therefore useful when maximal-regularity estimates give \(Au\) and \(u'\) in \(BMO\), rather than in \(L^\infty\).  We use the finite-interval norm
	\[
	\|g\|_{BMO_*(0,T;X)}:=\|g_{(0,T)}\|_X+[g]_{BMO(0,T;X)},
	\]
	where
	\[
	g_I:=\frac1{|I|}\int_I g(t)\,dt,
	\qquad
	[g]_{BMO(0,T;X)}:=
	\sup_{I\subset(0,T)}\frac1{|I|}\int_I\|g(t)-g_I\|_X\,dt.
	\]
	The average term is included because \(BMO\) itself is only a seminorm modulo constants.

	\begin{lemma}\label{lem:bmo-to-lq}
	Let \(X\) be a Banach space and \(1<q<\infty\).  Then
	\[
	BMO_*(0,T;X)\hookrightarrow L^q(0,T;X),
	\]
	and
	\[
	\|g\|_{L^q(0,T;X)}
	\le
	C_q T^{1/q}\|g\|_{BMO_*(0,T;X)}.
	\]
	\end{lemma}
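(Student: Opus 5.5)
We want $\|g\|_{L^q(0,T;X)}\le C_qT^{1/q}\|g\|_{BMO_*(0,T;X)}$. The plan is to reduce to the scalar John--Nirenberg inequality~\cite{JohnNirenberg1961}. We split $g=(g-g_{(0,T)})+g_{(0,T)}$. The constant part contributes exactly $T^{1/q}\|g_{(0,T)}\|_X$ to the $L^q$ norm. It therefore suffices to show
\[
\Bigl(\int_0^T\|g(t)-g_{(0,T)}\|_X^q\,dt\Bigr)^{1/q}\le C_qT^{1/q}[g]_{BMO(0,T;X)}.
\]
By scaling $t\mapsto Tt$ we may take $T=1$; both sides then scale consistently, since the $BMO$ seminorm is dilation invariant and the $L^q$ norm picks up $T^{1/q}$.

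The key step is the vector-valued John--Nirenberg estimate on $I_0=(0,1)$:
\[
\bigl|\{t\in I_0:\|g(t)-g_{I_0}\|_X>\alpha\}\bigr|\le C_1e^{-c\alpha/[g]_{BMO}}|I_0|.
\]
We cannot simply apply the scalar theorem to $\|g(\cdot)-g_{I_0}\|_X$, because that function is not obviously in scalar $BMO$ with comparable seminorm. Instead we redo the Calderón--Zygmund stopping-time proof directly in $X$.

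Normalize $[g]_{BMO}=1$ and run a dyadic Calderón--Zygmund decomposition of the scalar function $\|g-g_{I_0}\|_X$ at height $2$. This gives disjoint dyadic subintervals $\{I_j\}$ of $I_0$ with the following properties:
\begin{itemize}
\item $2<|I_j|^{-1}\int_{I_j}\|g-g_{I_0}\|_X\le 4$;
\item $\sum_j|I_j|\le \tfrac12|I_0|$;
\item $\|g-g_{I_0}\|_X\le 2$ a.e.\ off $\bigcup_j I_j$, by Lebesgue differentiation for Bochner integrable functions.
\end{itemize}
On each $I_j$ we have $\|g_{I_j}-g_{I_0}\|_X\le 4$ by the triangle inequality for Bochner integrals. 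Iterating this on each $I_j$ with $g-g_{I_j}$ gives, after $k$ generations, that the set where $\|g-g_{I_0}\|_X>6k$ has measure at most $2^{-k}|I_0|$. This is the exponential decay.

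The stopping-time argument uses only the norm, the triangle inequality and $\|f_I\|_X\le |I|^{-1}\int_I\|f\|_X$. This is why it transfers verbatim to Banach-valued functions, and I expect the only care needed is in this transfer.

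Finally, integrate the distribution function:
\[
\int_0^1\|g-g_{I_0}\|_X^q\,dt=q\int_0^\infty\alpha^{q-1}\bigl|\{\|g-g_{I_0}\|_X>\alpha\}\bigr|\,d\alpha\le qC_1\int_0^\infty\alpha^{q-1}e^{-c\alpha}\,d\alpha=C_q^q.
\]
Undoing the normalization and the scaling, and adding back the constant part, gives the stated embedding and estimate.
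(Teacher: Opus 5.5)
Your proof is correct, but it takes a longer route than the paper, and the reason you give for that route does not hold. You write that $\|g(\cdot)-g_{I_0}\|_X$ is ``not obviously in scalar $BMO$ with comparable seminorm.'' In fact the paper shows exactly this in two lines.

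Set $h(t)=\|g(t)-g_J\|_X$ with $J=(0,T)$. For any interval $I$ and any constant $c$ one has $\frac1{|I|}\int_I|h-h_I|\le\frac2{|I|}\int_I|h-c|$. Choose $c=\|g_I-g_J\|_X$. The reverse triangle inequality $\bigl|\|g(t)-g_J\|_X-\|g_I-g_J\|_X\bigr|\le\|g(t)-g_I\|_X$ then gives $[h]_{BMO(0,T)}\le 2[g]_{BMO(0,T;X)}$. Together with $h_J\le[g]_{BMO(0,T;X)}$, the scalar John--Nirenberg inequality applies directly, and the constant term $g_J$ is added back as in your splitting.

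Your alternative, redoing the Calder\'on--Zygmund stopping-time proof in $X$, is nonetheless valid. The selection at height $2$ is legitimate because the average of $\|g-g_{I_0}\|_X$ over $I_0$ is at most $1$. The bounds $2<$ average $\le 4$, $\sum_j|I_j|\le\frac12|I_0|$ and $\|g_{I_j}-g_{I_0}\|_X\le 4$ are all correct. The iteration also works. Off the $k$-th generation one actually has $\|g-g_{I_0}\|_X\le 4k-2$ a.e., so your threshold $6k$ is safe. The Lebesgue differentiation you need is only for the scalar functions $\|g-g_I\|_X$, not a genuinely vector-valued statement.

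Comparing the two routes: yours is self-contained and gives the vector-valued exponential distribution estimate with explicit constants. The paper's is much shorter and uses the scalar theorem as a black box. Your rescaling to $T=1$ is harmless but unnecessary, since the stopping-time argument is scale invariant.
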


	\begin{proof}
	Let \(J=(0,T)\) and write \(g_J\) for the average of \(g\) on \(J\).  Set
	\[
	h(t):=\|g(t)-g_J\|_X.
	\]
	We first show that \(h\) has scalar \(BMO\) seminorm controlled by the vector-valued \(BMO\) seminorm of \(g\).  For every interval \(I\subset J\), the scalar average \(h_I\) satisfies
	\[
	\frac1{|I|}\int_I |h(t)-h_I|\,dt
	\le
	\frac2{|I|}\int_I |h(t)-\|g_I-g_J\|_X|\,dt.
	\]
	By the reverse triangle inequality,
	\[
	\bigl|\|g(t)-g_J\|_X-\|g_I-g_J\|_X\bigr|
	\le
	\|g(t)-g_I\|_X.
	\]
	Therefore
	\[
	[h]_{BMO(0,T)}\le 2[g]_{BMO(0,T;X)}.
	\]
	Also
	\[
	h_J=\frac1T\int_0^T\|g(t)-g_J\|_X\,dt
	\le [g]_{BMO(0,T;X)}.
	\]
	The scalar John--Nirenberg inequality~\cite{JohnNirenberg1961} on the finite interval \(J\) gives
	\[
	\|h\|_{L^q(0,T)}
	\le
	C_q T^{1/q}\bigl(h_J+[h]_{BMO(0,T)}\bigr)
	\le
	C_q T^{1/q}[g]_{BMO(0,T;X)}.
	\]
	Finally,
	\[
	\|g\|_{L^q(0,T;X)}
	\le
	\|g-g_J\|_{L^q(0,T;X)}+T^{1/q}\|g_J\|_X
	\le
	C_qT^{1/q}\|g\|_{BMO_*(0,T;X)}.
	\]
	\end{proof}

	\begin{theorem}\label{thm:bmo-lions-peetre-trace}
	Let \((X_0,X_1)\) be a Banach couple with \(X_1\hookrightarrow X_0\).  Assume
	\[
	u\in BMO_*(0,T;X_1),
	\qquad
	u'\in BMO_*(0,T;X_0),
	\]
	where \(u'\) is the \(X_0\)-valued distributional derivative.  Then for every \(1<q<\infty\),
	\[
	u\in C\bigl([0,T];(X_0,X_1)_{1-1/q,q}\bigr),
	\]
	and
	\[
	\|u\|_{C([0,T];(X_0,X_1)_{1-1/q,q})}
	\le
	C_q\Bigl(
	\|u\|_{BMO_*(0,T;X_1)}+
	T\|u'\|_{BMO_*(0,T;X_0)}
	\Bigr),
	\]
	where $C_q$ is independent of $T$.
	\end{theorem}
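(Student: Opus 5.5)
The plan is to reduce to the classical Lions--Peetre trace theorem through Lemma~\ref{lem:bmo-to-lq}, handling the scaling in $T$ by rescaling to a unit interval. Fix $1<q<\infty$. By Lemma~\ref{lem:bmo-to-lq} applied with $X=X_1$ and $X=X_0$,
\[
u\in L^q(0,T;X_1),\qquad u'\in L^q(0,T;X_0),
\]
so $u\in W^{1,q}(0,T;X_0)\cap L^q(0,T;X_1)$. The classical trace theorem recalled in the introduction \cite{LionsPeetre1964,Peetre1966} then gives $u\in C([0,T];(X_0,X_1)_{1-1/q,q})$. This settles the qualitative part; what remains is the estimate with a $T$-independent constant.

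For the quantitative bound I would rescale to avoid tracking $T$ in the Lions--Peetre constant. Set $v(s):=u(Ts)$ for $s\in(0,1)$. Then $v'(s)=Tu'(Ts)$. The $BMO$ seminorms are dilation invariant, and the average over $(0,1)$ equals the average over $(0,T)$. Hence
\[
\|v\|_{BMO_*(0,1;X_1)}=\|u\|_{BMO_*(0,T;X_1)},\qquad
\|v'\|_{BMO_*(0,1;X_0)}=T\|u'\|_{BMO_*(0,T;X_0)}.
\]
On the unit interval, Lemma~\ref{lem:bmo-to-lq} with $T=1$ bounds the $L^q(0,1;X_1)$ and $L^q(0,1;X_0)$ norms of $v$ and $v'$ by $C_q$ times these $BMO_*$ norms. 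The Lions--Peetre trace estimate on the fixed interval $(0,1)$ then gives
\[
\sup_{s\in[0,1]}\|v(s)\|_{(X_0,X_1)_{1-1/q,q}}\le C_q\bigl(\|v\|_{L^q(0,1;X_1)}+\|v'\|_{L^q(0,1;X_0)}\bigr).
\]
Since $\sup_t\|u(t)\|=\sup_s\|v(s)\|$, the stated inequality follows with $C_q$ independent of $T$.

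The main obstacle is the uniform-in-time trace estimate on $(0,1)$ with full $W^{1,q}$ norms; the classical statement is often phrased only at $t=0$ or on the half-line. To obtain it directly I would use the $K$-functional argument of Theorem~\ref{thm:morrey-trace-evolution}, Step 3. Fix $t$, split $u(t)=a_h+b_h$ using forward or backward averages, and bound $r^{-\theta}K(r,v(t))$ with $\theta=1-1/q$ and $h=r\le1/2$ by Hardy-type maximal averages of $\|v'\|_{X_0}$ and $\|v\|_{X_1}$. Then integrate in $dr/r$ and apply Hardy's inequality to get the $L^q$ bound; for $r>1/2$ use the $X_0$ bound as in Step~3. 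Continuity in $(X_0,X_1)_{1-1/q,q}$ follows by the same estimate applied to translates together with density of smooth functions in $W^{1,q}\cap L^q(X_1)$.
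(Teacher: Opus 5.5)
Your proposal is correct and is essentially the paper's argument. Lemma~\ref{lem:bmo-to-lq} places $u$ in $W^{1,q}(0,T;X_0)\cap L^q(0,T;X_1)$, and the classical Lions--Peetre trace theorem on the unit interval, transported by the dilation $t=Ts$, then gives the estimate with a $T$-independent constant. The only differences are the order of the steps and the extra sketch. You rescale first at the $BMO_*$ level, using dilation invariance of the $BMO$ seminorm so that $\|v'\|_{BMO_*(0,1;X_0)}=T\|u'\|_{BMO_*(0,T;X_0)}$, and then apply the lemma with $T=1$; the paper instead applies the lemma on $(0,T)$ and rescales the $L^q$ trace inequality, which is equivalent. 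Your Hardy-inequality sketch of the uniform-in-time trace bound just supplies the classical result that the paper cites.
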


	\begin{proof}
	By Lemma~\ref{lem:bmo-to-lq},
	\[
	u\in L^q(0,T;X_1),
	\qquad
	u'\in L^q(0,T;X_0).
	\]
	Hence
	\[
	u\in W^{1,q}(0,T;X_0)\cap L^q(0,T;X_1).
	\]
	Rescaling the classical trace theorem from $(0,1)$ to $(0,T)$ gives
	\[
	\|u\|_{C([0,T];(X_0,X_1)_{1-1/q,q})}
	\le C_q\Bigl(
	T^{-1/q}\|u\|_{L^q(0,T;X_1)}
	+T^{1-1/q}\|u'\|_{L^q(0,T;X_0)}
	\Bigr).
	\]
	Applying Lemma~\ref{lem:bmo-to-lq} to the two terms gives
	\[
	T^{-1/q}\|u\|_{L^q(X_1)}\le C_q\|u\|_{BMO_*(X_1)},
	\qquad
	T^{1-1/q}\|u'\|_{L^q(X_0)}\le C_qT\|u'\|_{BMO_*(X_0)},
	\]
	which is the asserted estimate.
	\end{proof}

	\begin{theorem}\label{thm:bmo-log-trace}
	Let \((X_0,X_1)\) be a Banach couple with \(X_1\hookrightarrow X_0\).  Assume
	\[
	u\in L^\infty(0,T;X_1),
	\qquad
	u'\in BMO_*(0,T;X_0).
	\]
	Then \(u\) has an \(X_0\)-valued continuous representative and, for every \(0<\eta<1\),
	\[
	u\in C\bigl([0,T];(X_0,X_1)_{\eta,\infty}\bigr).
	\]
	More precisely, for \(0<|t-s|\le T\),
	\[
	\|u(t)-u(s)\|_{(X_0,X_1)_{\eta,\infty}}
	\le
	C_\eta
	\Bigl(|t-s|\log\frac{eT}{|t-s|}\Bigr)^{1-\eta}
	\|u'\|_{BMO_*(0,T;X_0)}^{1-\eta}
	\|u\|_{L^\infty(0,T;X_1)}^\eta.
	\]
	Consequently,
	\[
	u\in C^{0,\alpha}\bigl([0,T];(X_0,X_1)_{\eta,\infty}\bigr)
	\]
	for every \(0<\alpha<1-\eta\).
	\end{theorem}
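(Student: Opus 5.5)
The plan is to combine two ingredients: a logarithmic modulus of continuity in \(X_0\) coming from the BMO control of \(u'\), and the elementary interpolation inequality from the proof of Theorem~\ref{thm:endpoint-subcritical-trace}.

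\emph{Continuity in \(X_0\).} By Lemma~\ref{lem:bmo-to-lq}, \(u'\in L^q(0,T;X_0)\) for some \(q>1\). Since \(L^\infty(0,T;X_1)\hookrightarrow L^1(0,T;X_0)\), we get \(u\in W^{1,1}(0,T;X_0)\). Hence \(u\) has an absolutely continuous \(X_0\)-valued representative, and
\[
u(t)-u(s)=\int_s^t u'(\tau)\,d\tau .
\]

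\emph{Logarithmic estimate.} The key step is the local BMO estimate
\[
\int_I\|g\|_{X_0}\le C\,|I|\log\frac{eT}{|I|}\,\|g\|_{BMO_*(0,T;X_0)},\qquad I\subset(0,T),\ g=u'.
\]
Let \(J_0=(0,T)\) and choose a chain of intervals \(I=J_k\subset J_{k-1}\subset\dots\subset J_0\) with \(|J_{j}|\simeq |J_{j-1}|/2\), so that \(k\simeq\log_2(T/|I|)\). The standard telescoping bound gives
\[
\|g_{J_j}-g_{J_{j-1}}\|_{X_0}\le 2\,[g]_{BMO}.
\]
Summing over \(j\) yields
\[
\|g_I\|_{X_0}\le\|g_{(0,T)}\|_{X_0}+Ck\,[g]_{BMO}.
\]
Then
\[
\int_I\|g\|\le |I|\,\|g_I\|+\int_I\|g-g_I\|\le |I|\bigl(\|g_I\|+[g]_{BMO}\bigr).
\]
This gives the bound with the factor \(1+\log_2(T/|I|)\lesssim\log(eT/|I|)\). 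Consequently, for \(\delta=|t-s|\),
\[
\|u(t)-u(s)\|_{X_0}\le C\,\delta\log\frac{eT}{\delta}\,\|u'\|_{BMO_*}.
\]
Getting the constant and the chain construction right, in particular handling intervals near the endpoints of \((0,T)\), is the main technical point. It is routine once the dyadic chain is set up inside \((0,T)\).

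\emph{Interpolation.} First take Lebesgue points \(s,t\) of the \(X_1\)-valued representative with \(\|u(s)\|_{X_1},\|u(t)\|_{X_1}\le\|u\|_{L^\infty(X_1)}\). For these we have \(x=u(t)-u(s)\in X_0\cap X_1\) and \(\|x\|_{X_1}\le 2\|u\|_{L^\infty(X_1)}\). Applying
\[
\|x\|_{(X_0,X_1)_{\eta,\infty}}\le C_\eta\|x\|_{X_0}^{1-\eta}\|x\|_{X_1}^{\eta}
\]
gives the stated inequality for such \(s,t\).

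For general \(s,t\in[0,T]\), approximate them by Lebesgue points \(s_j\to s\), \(t_j\to t\). The differences \(u(t_j)-u(s_j)\) converge in \(X_0\) and are bounded in the \((X_0,X_1)_{\eta,\infty}\)-norm by the uniform right-hand side. Lower semicontinuity of \(K(r,\cdot)\) under \(X_0\)-convergence would settle this: I would argue via \(K(r,x)\le \|x-y\|_{X_0}+K(r,y)\), then let \(j\to\infty\) for fixed \(r\). The same reasoning places each \(u(t)\) in \((X_0,X_1)_{\eta,\infty}\), using the interpolation inequality applied to \(u(t)\) itself at Lebesgue points.

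\emph{Consequences.} The modulus \((\delta\log(eT/\delta))^{1-\eta}\) tends to \(0\), which gives continuity into \((X_0,X_1)_{\eta,\infty}\). Since \(\delta\log(eT/\delta)\le C_\epsilon T^{\epsilon}\delta^{1-\epsilon}\), this yields \(C^{0,\alpha}\) for every \(\alpha<1-\eta\).
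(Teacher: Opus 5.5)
Your proposal is correct and follows essentially the same route as the paper: a nested dyadic chain of intervals with telescoping of averages gives the logarithmic $X_0$-modulus of $u(t)-u(s)=\int_s^t u'$, and then the elementary inequality $\|x\|_{(X_0,X_1)_{\eta,\infty}}\le C_\eta\|x\|_{X_0}^{1-\eta}\|x\|_{X_1}^{\eta}$ is applied at almost every pair of times. The only real difference is how you pass to all $s,t\in[0,T]$: the paper uses uniform continuity and completion in $(X_0,X_1)_{\eta,\infty}$, while you use the bound $K(r,x)\le\|x-y\|_{X_0}+K(r,y)$ together with $X_0$-convergence, which is equally valid and somewhat more explicit.
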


	\begin{proof}
	We first prove the base-space logarithmic modulus.  Let \(v\in BMO_*(0,T;X_0)\).  We claim that for every interval \(I\subset(0,T)\) of length \(h\),
	\[
	\|v_I\|_{X_0}
	\le
	C\|v\|_{BMO_*(0,T;X_0)}\log\frac{eT}{h}.
	\]
	Indeed, let \(J=(0,T)\).  Enlarge \(I\) dyadically inside \(J\): choose intervals
	\[
	I=I_0\subset I_1\subset\cdots\subset I_N\subset J
	\]
	with \(|I_{k+1}|\le 2|I_k|\) and \(N\le C\log(eT/h)\), and with \(|I_N|\simeq T\).  For nested intervals \(I_k\subset I_{k+1}\),
	\[
	\|v_{I_k}-v_{I_{k+1}}\|_{X_0}
	\le
	\frac1{|I_k|}\int_{I_k}\|v(t)-v_{I_{k+1}}\|_{X_0}\,dt
	\le
	C[v]_{BMO(0,T;X_0)}.
	\]
	Telescoping gives
	\[
	\|v_I\|_{X_0}
	\le
	\|v_J\|_{X_0}+C N[v]_{BMO(0,T;X_0)}
	\le
	C\|v\|_{BMO_*(0,T;X_0)}\log\frac{eT}{h}.
	\]

	Apply this to \(v=u'\).  If \(I=(s,t)\), then
	\[
	u(t)-u(s)=\int_s^t u'(\tau)\,d\tau=|I|(u')_I
	\]
	in \(X_0\).  Hence
	\[
	\|u(t)-u(s)\|_{X_0}
	\le
	C |t-s|\log\frac{eT}{|t-s|}
	\|u'\|_{BMO_*(0,T;X_0)}.
	\]
	On the other hand, for almost every \(s,t\),
	\[
	\|u(t)-u(s)\|_{X_1}
	\le 2\|u\|_{L^\infty(0,T;X_1)}.
	\]
	Using the elementary interpolation inequality
	\[
	\|x\|_{(X_0,X_1)_{\eta,\infty}}
	\le
	C_\eta\|x\|_{X_0}^{1-\eta}\|x\|_{X_1}^{\eta},
	\qquad x\in X_1,
	\]
	with \(x=u(t)-u(s)\), we obtain the stated logarithmic modulus at Lebesgue times.  The representative on all of \([0,T]\) is obtained by completion in \((X_0,X_1)_{\eta,\infty}\), using the modulus estimate and the \(X_0\)-continuous representative.  Finally, since
	\[
	h\log(eT/h)\le C_{\alpha,\eta,T}h^{\alpha/(1-\eta)}
	\]
	for every \(0<h\le T\) and every \(0<\alpha<1-\eta\), the H\"older conclusion follows.
	\end{proof}

	\begin{proposition}\label{prop:bmo-log-sharp}
	The logarithmic factor in Theorem~\ref{thm:bmo-log-trace} cannot in general be removed.
	\end{proposition}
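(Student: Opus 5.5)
Sharpness will be shown with a counterexample in the simplest couple. We take \(X_0=X_1=\mathbb R\), which certainly satisfies \(X_1\hookrightarrow X_0\). Then \((X_0,X_1)_{\eta,\infty}=\mathbb R\) with an equivalent norm, so the interpolation space plays no role and the question reduces to the base-space modulus
\[
|u(t)-u(s)|\lesssim |t-s|\log\frac{eT}{|t-s|}.
\]
It therefore suffices to produce a \(u\) that satisfies the hypotheses of Theorem~\ref{thm:bmo-log-trace} but whose increment near \(0\) is comparable to \(h\log(e/h)\). Such a \(u\) violates \(|u(h)-u(0)|\le C h\) for every \(C\), and hence also violates any modulus \(h^{1-\eta}\,o\bigl((\log(eT/h))^{1-\eta}\bigr)\), since in the one-dimensional couple the interpolation norm is just \(|\cdot|\).

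Concretely, I take \(T=1\), \(u'(t)=\log(1/t)\), and
\[
u(t)=\int_0^t\log(1/s)\,ds=t\log(1/t)+t .
\]
The steps, in order, are as follows.

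First, \(\log(1/t)\) is the standard example of a function in \(BMO(0,1)\) that is not in \(L^\infty\). I would verify \([\log(1/\cdot)]_{BMO(0,1)}<\infty\) directly on intervals \(I=(a,a+h)\), in two cases. If \(a\le h\), then \(I\subset(0,2h)\); by scale invariance (\(\log(1/t)=\log(1/h)+\log(h/t)\), and constants do not change oscillation), the mean oscillation equals that of \(\log(1/\tau)\) on a subinterval of \((0,2)\), which is bounded. If \(a>h\), then on \(I\) we have \(|\log t-\log a|\le \log 2\). The average over \((0,1)\) is \(1\), so \(u'\in BMO_*(0,1)\).

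Second, \(u\) is bounded on \([0,1]\) (it is at most \(2\)), so \(u\in L^\infty(0,1;X_1)\). Thus \(u\) satisfies the hypotheses of Theorem~\ref{thm:bmo-log-trace}.

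Third, for \(s=0\) and \(t=h\) we get \(|u(h)-u(0)|=h\log(1/h)+h\simeq h\log(e/h)\). Hence the quotient \(|u(h)-u(0)|/h\) tends to infinity as \(h\downarrow0\), and the logarithm cannot be dropped. In the \(\eta\)-form, if the estimate held with \((h\log(e/h))^{1-\eta}\) replaced by \(h^{1-\eta}\), then in \(\mathbb R\), where \(\|x\|_{(\mathbb R,\mathbb R)_{\eta,\infty}}\simeq|x|\), we would get \(h\log(1/h)\lesssim h^{1-\eta}\). That is false only in the exponent \(1-\eta\) being attained, so I would state the sharpness precisely at the level of the base modulus: the factor \(\log\) must appear to the power \(1-\eta\).

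To make the \(\eta\)-dependent statement genuinely sharp, and not merely the \(X_0\) estimate, I would instead use the diagonal couple from Proposition~\ref{prop:q-infty-forced}, or a two-dimensional couple \(X_0=\mathbb R\), \(X_1=\mathbb R\) with norm \(\varepsilon^{-1}|\cdot|\), optimized in \(\varepsilon=\varepsilon(h)\). This is the main obstacle. With a fixed finite-dimensional couple, the \(K\)-functional is comparable to \(\min\{1,r\}|x|\), so the \((\cdot)_{\eta,\infty}\) norm is simply \(|x|\) and carries no \(\eta\)-structure. I would first try the sequence couple \(X_0=c_0\), \(X_1=\{\sup 2^n|x_n|<\infty\}\), with \(u(t)=\bigl(\phi(2^n t)\,2^{-n}\bigr)_n\) for a suitable bounded \(\phi\) with \(\phi'\in BMO\). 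I expect the equality case of \(\min\{\|x\|_{X_0},r\|x\|_{X_1}\}\) to force the factor \((h\log(e/h))^{1-\eta}\). If this becomes too technical, I would settle for the stated claim in its literal form, namely that the modulus cannot be replaced by \(C|t-s|^{1-\eta}\). The scalar example already achieves this, because \(h\log(1/h)\) is not \(O(h)\) and the case \(\eta\to0\) reflects it.
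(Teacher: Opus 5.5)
Your construction is the paper's proof: the same scalar couple, $u'=\log(1/t)$ (the paper uses $\log(e/t)$, which differs by a constant), and the same conclusion $|u(h)-u(0)|/h\to\infty$. Your two-case check that $\log(1/t)\in BMO(0,1)$ is more careful than the paper's. What this proves, for you as for the paper, is that the base-space modulus $\|u(t)-u(s)\|_{X_0}\le C|t-s|\log(eT/|t-s|)\,\|u'\|_{BMO_*(0,T;X_0)}$ from the proof of Theorem~\ref{thm:bmo-log-trace} cannot be improved to $C|t-s|$. Your intermediate decision to state sharpness at the level of the base modulus is the right one. The paper's closing sentence about $(t\log(e/t))^{1-\eta}$ is only a heuristic.

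The problem is what you claim beyond this. For fixed $\eta\in(0,1)$ one has $h\log(e/h)\le C_\eta h^{1-\eta}$. So your scalar $u$ \emph{satisfies} the estimate with $(h\log(eT/h))^{1-\eta}$ replaced by $h^{1-\eta}$. Hence it does not violate every modulus $h^{1-\eta}o\bigl((\log(eT/h))^{1-\eta}\bigr)$. Your closing claim, that the scalar example shows the modulus cannot be replaced by $C|t-s|^{1-\eta}$, is also false; it only shows that such a constant must blow up like $1/\eta$ as $\eta\to0$.

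The planned upgrade through the diagonal couple cannot succeed either, because for fixed $0<\eta<1$ the logarithm in the displayed estimate is removable. Put $A=\|u'\|_{BMO_*}$, $B=\|u\|_{L^\infty(X_1)}$ and $h=t-s$. For $h\le\rho\le T/3$ let $b_\rho(\tau)=\rho^{-1}\int_\tau^{\tau+\rho}u$, using backward averages near $T$. Then $\|b_\rho(t)-b_\rho(s)\|_{X_1}\le 2hB/\rho$. Also $(u-b_\rho)(t)-(u-b_\rho)(s)=\int\phi\,u'$ for a kernel $\phi$ with $\int\phi=0$, supported in $J=[s,t+\rho]$, with $|\phi|\le1$ on $[s,t]$ and $|\phi|\le h/\rho$ elsewhere. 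Subtracting $(u')_J$ and telescoping from $[s,t]$ up to $J$ bounds the $X_0$-norm by $ChA\log(e\rho/h)$, a logarithm relative to $\rho$ rather than to $T$. Take $\rho=rB/A$ for $hA/B\le r\le TA/(3B)$. For smaller $r$ use the trivial bound $K\le 2rB$, and for larger $r$ use $K\le ChA\log(eT/h)$. Since $\sup_{m\ge1}m^{-\eta}\log(em)<\infty$, this gives $\sup_{r>0}r^{-\eta}K(r,u(t)-u(s))\le C_\eta(hA)^{1-\eta}B^\eta$.

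So the proposition can only be established in the base-space sense, or in the weaker sense that a log-free constant cannot stay bounded as $\eta\to0$. That is exactly what your example and the paper's prove. State it that way and drop the final paragraph of your plan.
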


	\begin{proof}
	It is enough to consider the scalar case \(X_0=X_1=\mathbb R\).  On \((0,1)\), let
	\[
	v(t)=\log\frac{e}{t}.
	\]
	The function \(v\) belongs to scalar \(BMO(0,1)\).  This follows directly from the scale invariance of the logarithm near the origin: on intervals \((0,r)\), the mean oscillation is independent of \(r\), and on intervals away from the origin the logarithm has bounded mean oscillation by elementary monotonicity estimates.
	Define
	\[
	u(t)=\int_0^t v(s)\,ds.
	\]
	Then
	\[
	u(t)=t\log\frac{e}{t}+t
		\simeq t\log\frac{e}{t}
	\]
	as \(t\downarrow0\).  Hence \(u'\in BMO(0,1)\), but \(u\) does not satisfy a Lipschitz estimate at the origin.  More precisely,
	\[
	\frac{|u(t)-u(0)|}{t}=\log\frac{e}{t}+1\to\infty.
	\]
	Thus a theorem assuming only \(u'\in BMO\) cannot replace the modulus \(t\log(e/t)\) by \(Ct\).  After interpolation, the corresponding factor \((t\log(e/t))^{1-\eta}\) is therefore the natural endpoint modulus.
	\end{proof}

\section{Concluding remarks}
The exact trace space of $\mathbb E^{p,\lambda}$ is the weak interpolation space $(X_0,X_1)_{\theta,\infty}$. The sharpness results show that both the smoothness exponent and the weak fine index are intrinsic to Morrey control. The endpoint estimates provide a complementary bounded-mean-oscillation trace principle and identify the corresponding logarithmic modulus of continuity.

\section*{Statements and Declarations}
\noindent\textbf{Competing interests.} The authors declare no competing interests.

\noindent\textbf{Data availability.} No datasets were generated or analysed during this study.

\noindent\textbf{Use of generative AI.} During preparation of the manuscript, the first author used Gemini for language editing and the literature exposition. All mathematical statements and proofs were reviewed and verified by the authors, who take full responsibility for the content of the manuscript.

\end{document}